\NeedsTeXFormat{LaTeX2e}% LaTeX 2.09 can't be used (nor non-LaTeX)
[2013/29/03]% LaTeX date must December 1994 or later
\documentclass[12pt]{amsart}
\usepackage{bbm}
%    Some definitions useful in producing this sort of documentation:
\chardef\bslash=`\\ % p. 424, TeXbook
%    Normalized (nonbold, nonitalic) tt font, to avoid font
%    substitution warning messages if tt is used inside section
%    headings and other places where odd font combinations might
%    result.

%    command name

%    LaTeX package name

%    File name

%    environment name

\hfuzz1pc % Don't bother to report overfull boxes if overage is < 1pc

%       Theorem environments

%% \theoremstyle{plain} %% This is the default
\newtheorem{thm}{Theorem}[section]
\newtheorem*{thm*}{Theorem}

\newtheorem{lem}[thm]{Lemma}
\newtheorem{prop}[thm]{Proposition}

\theoremstyle{definition}
\newtheorem{defn}{Definition}[section]
\newtheorem{rem}{Remark}[section]
\newtheorem{emp}{Example}[section]
\newtheorem*{notation}{Notation}

\theoremstyle{remark}

%\numberwithin{equation}{section}

%       Math definitions

\newcommand{\N}{\mathbb{N}}
\newcommand{\R}{\mathbb{R}}

\newcommand{\1}{\mathbbm{1}}

\DeclareMathOperator{\codim}{codim}

%    \interval is used to provide better spacing after a [ that
%    is used as a closing delimiter.

%    Notation for an expression evaluated at a particular condition. The
%    optional argument can be used to override automatic sizing of the
%    right vert bar, e.g. \eval[\biggr]{...}_{...}
\newcommand{\eval}[2][\right]{\relax
  \ifx#1\right\relax \left.\fi#2#1\rvert}

%    Enclose the argument in vert-bar delimiters:

%    Enclose the argument in double-vert-bar delimiters:

%\setcounter{tocdepth}{5}

\begin{document}
\title[auerbach]{On explicit constructions of Auerbach bases in separable Banach spaces.}
\author[Robert Bogucki]{Robert Bogucki}
\address{Institute of Mathematics \\
University of Warsaw \\
Banacha 2, 02-097 Warszawa, Poland}
\email{r.bogucki@students.mimuw.edu.pl} 

\begin{abstract}
This paper considers explicit constructions of Auerbach bases in separable Banach spaces. Answering the question of A. Pe{\l}czy{\'n}ski, we prove by construction the existence of Auerbach basis in arbitrary subspace of $c_0$ of finite codimension and in the space $C(K)$ for $K$ compact countable metric space.
\end{abstract}
\maketitle

\section{Introduction}

The problem of Auerbach basis in finite dimensional Banach spaces has a simple, classical solution. Yet, the infinite-dimensional, separable case seems to be much more challenging. By using naive methods, namely Gramm-Schmidt orthogonalisation techniques one can construct a total and fundamental biorthogonal system, however we have no warranty about its boundedness. In order to obtain the boundedness, one needs a more subtle approach. This problem was first solved by A. Pe{\l}czy{\'n}ski and R. Ovsepian in 1975 \cite{PO}. One year later, Pe{\l}czy{\'n}ski \cite{P} strengthened the result by obtaining a "$1+ \epsilon$ Auerbach basis", one of the main ingredients of the proof was the celebrated Dvoretzky theorem. The problem whether every separable Banach space contains an Auerbach basis seems to be still open. A. Pe{\l}czy{\'n}ski \cite{PW} asked about the explicit construction of Auerbach bases in the subspaces of $c_0$ and general $C(K)$ spaces for $K$ compact, countable metric space. In section $2$ we prove the $c_0$ case assuming that the space is of finite codimenision and in section $3$ we present the proof for $C(K)$.

Let $X$ be a separable Banach space. We shall recall some essential definitions.

\begin{notation}
To avoid confusion, by $\left| \cdot \right|$ we will denote the Euclidean norm, whereas $\| \cdot \|$ will always refer to the underlying Banach space norm. By $\left[x_{n}\right]_{n=1}^{\infty}$ we will denote the closed linear span of $\left\{x_{n}\right\}_{n=1}^{\infty}$. 
\end{notation}

\begin{defn}
A sequence $\left(x_{n};x_{n}^{*}\right)_{n=1}^{\infty}$, where $x_{n} \in X$, and $x_{n}^{*} \in X^{*}$ is called:
\begin{enumerate}
\item biorthogonal, if $x_{k}^{*}(x_{j})=\delta_{k,j}$ for $k,j = 1,2,...$,
\item total, if $x_{n}^{*}(x)=0$ for every $n=1,2,...$ implies $x=0$,
\item fundamental, if $\left[x_{n}\right]_{n=1}^{\infty}=X$ (equivalently, $x^{*}(x_{n})=0$ for every $n=1,2,...$ implies $x^{*}=0$),
\item bounded by $M$, if $\|x_{n}\|\|x_{n}^{*}\| \le M$ for every $n=1,2,...$.
\end{enumerate}
\end{defn}

\begin{defn}
Auerbach basis of $X$ is a sequence satisfying (1), (2), (3), with the property that $\|x_{n}\|=\|x_{n}^*\|=1$ for every $n=1,2,...$.
\end{defn}
A sequence satisfying (1), (2), (3) is sometimes called a Markushevich basis.

Recall the following classical results.

\begin{thm*}[Auerbach, \cite{L}]
Every finite dimensional Banach space has an Auerbach basis.
\end{thm*}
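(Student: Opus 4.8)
The plan is to exploit the compactness of the unit ball in finite dimensions together with a volume (determinant) maximization argument. First I would fix a linear isomorphism $T\colon X\to\R^n$ with $n=\dim X$, so that the determinant $\det(y_1,\dots,y_n)$ of an $n$-tuple of vectors in $X$ is well defined up to a fixed nonzero multiplicative constant, which plays no role below. Since the closed unit ball $B_X$ of $X$ is compact, the continuous function
\[
(y_1,\dots,y_n)\longmapsto \abs{\det(y_1,\dots,y_n)}
\]
attains its maximum on $B_X^{\,n}$ at some point $(x_1,\dots,x_n)$; this maximum is strictly positive, since one may insert into it any linear basis rescaled to lie in $B_X$.

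The key step is to read off the biorthogonal functionals via Cramer's rule. Define $x_i^*\in X^*$ by
\[
x_i^*(y)=\frac{\det(x_1,\dots,x_{i-1},\,y,\,x_{i+1},\dots,x_n)}{\det(x_1,\dots,x_n)},\qquad y\in X .
\]
These are well-defined linear functionals, as the denominator is nonzero, and expanding the determinant along the $i$-th slot gives $x_i^*(x_j)=\delta_{i,j}$, so $(x_i;x_i^*)_{i=1}^n$ is biorthogonal. The $x_i$ are then linearly independent in the $n$-dimensional space $X$, so the system is automatically total and fundamental; thus only the norm conditions remain.

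For the vectors, each $\norm{x_i}\le 1$ by construction, and if some $\norm{x_i}<1$ we could replace $x_i$ by $x_i/\norm{x_i}\in B_X$ and strictly increase $\abs{\det(x_1,\dots,x_n)}$, contradicting maximality; hence $\norm{x_i}=1$. For the functionals, if $\norm{y}\le 1$ then the tuple obtained by putting $y$ in slot $i$ lies in $B_X^{\,n}$, so by maximality $\abs{\det(x_1,\dots,y,\dots,x_n)}\le\abs{\det(x_1,\dots,x_n)}$, i.e. $\abs{x_i^*(y)}\le 1$; therefore $\norm{x_i^*}\le 1$. On the other hand $x_i^*(x_i)=1$ together with $\norm{x_i}=1$ forces $\norm{x_i^*}\ge 1$, so $\norm{x_i^*}=1$, and $(x_i;x_i^*)_{i=1}^n$ is the desired Auerbach basis.

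I expect the only genuinely delicate point — beyond the bookkeeping with the multilinear determinant — to be the justification that the maximum is attained and positive, which is exactly where finite-dimensionality enters, through compactness of $B_X$ and equivalence of norms; everything else reduces to Cramer's rule and the maximality inequality. It is precisely this step that has no analogue in the separable infinite-dimensional setting, since $B_X$ is no longer compact and no determinant is available, which motivates the explicit constructions developed in the subsequent sections.
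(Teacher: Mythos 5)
Your proof is correct, and it is the classical determinant-maximization argument for Auerbach's theorem; the paper itself states this result without proof, merely citing \cite{L}, so there is nothing to compare it against line by line. All the steps check out: the maximum is attained by compactness of $B_X^{\,n}$ and is positive because any normalized linear basis gives a nonzero determinant; Cramer's rule yields biorthogonality; maximality forces $\norm{x_i}=1$ (note that $x_i\neq 0$ is guaranteed by positivity of the maximal determinant, so the rescaling step is legitimate) and $\norm{x_i^*}\le 1$, while $x_i^*(x_i)=1$ gives the reverse inequality. It is worth observing that this is precisely the strategy the paper recycles in its Theorem 2.1 for finite-codimensional subspaces of $c_0$: there one maximizes $\abs{\det}$ over $n\times n$ submatrices of the coefficient matrix $D$ and reads off the coordinates of the basis vectors by Cramer's rule, with the Hadamard inequality substituting for compactness to guarantee that the maximum is attained among infinitely many candidate columns. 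So your "only genuinely delicate point" — attainment of the maximum — is exactly the point that requires a new idea in the infinite-dimensional setting, as you correctly anticipate.
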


\begin{thm*}[Pe{\l}czy{\'n}ski, \cite{P}]
Let $X$ be an infinite-dimensional, separable Banach space. Then, for every $\epsilon > 0$ there exists a fundamental, total, biorthogonal sequence $\left(x_{n};x_{n}^{*}\right)_{n=1}^{\infty}$, satisfying $\|x_{n}\|\|x_{n}^{*}\| \le 1 + \epsilon$.
\end{thm*}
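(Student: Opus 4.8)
\emph{A sketch.}
I would first recast the goal. It suffices to construct a normalized, fundamental, minimal sequence $(x_n)$ in $X$ with
$\operatorname{dist}\bigl(x_n,\overline{[x_m:m\ne n]}\bigr)\ge(1+\epsilon)^{-1}$ for every $n$,
whose (automatically unique, since $(x_n)$ is fundamental) biorthogonal functionals $(x_n^*)$ are total. Indeed, when $(x_n)$ is fundamental and minimal the closed span of the remaining vectors is exactly the hyperplane $\ker x_n^*$ (the span of all $x_m$ being dense), so $\norm{x_n^*}=\operatorname{dist}(x_n,\ker x_n^*)^{-1}=\operatorname{dist}\bigl(x_n,\overline{[x_m:m\ne n]}\bigr)^{-1}\le1+\epsilon$ and hence $\norm{x_n}\norm{x_n^*}\le1+\epsilon$. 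So the construction faces a geometric demand --- keep every $x_n$ far from the closed span of the others --- and a bookkeeping demand --- fundamentality and totality. The geometric demand is trivial in a Hilbert space (a nearly orthonormal sequence), which is precisely the point where Dvoretzky's theorem enters.

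I would run the construction inductively in finite blocks, starting from a Markushevich basis $(e_n,e_n^*)$ of the separable space $X$ (or from a dense sequence $(e_n)\subseteq X$ and a total sequence $(e_n^*)\subseteq X^*$), with a small $\delta>0$ fixed so that $(1-\delta)^{-1}\le1+\epsilon$. After stage $n$ one has a finite biorthogonal system $(x_i,x_i^*)_{i\le m_n}$ with $\norm{x_i}=1$, $\norm{x_i^*}\le1+\epsilon$, with $e_1,\dots,e_n$ in the span of the $x_i$ (which will force fundamentality) and $e_1^*,\dots,e_n^*$ in the span of the $x_i^*$ (which will force totality). To continue, set $G_n=[x_1,\dots,x_{m_n}]$, let $P_n x=\sum_{i\le m_n}x_i^*(x)x_i$ be the projection onto $G_n$, so that $Z_n=\ker P_n$ has finite codimension. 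Inside $Z_n$, a Dvoretzky-type argument should produce a finite-dimensional block space $F$ that is nearly Euclidean and in \emph{nearly orthogonal} position with respect to $G_n$, meaning $\norm{f+g}\ge(1-\delta)\norm f$ for all $f\in F$, $g\in G_n$; on such an $F$ take an Auerbach basis $(v_a,v_a^*)_a$ and extend each $v_a^*$ by Hahn--Banach to a functional $\tilde v_a^*$ on $X$ vanishing on $G_n$ --- the near-orthogonality bounds $\norm{\tilde v_a^*}\le(1-\delta)^{-1}\le1+\epsilon$. Since the $\tilde v_a^*$ vanish on $G_n$ and the $v_a$ lie in $Z_n=\bigcap_{i\le m_n}\ker x_i^*$, adjoining the pairs $(v_a,\tilde v_a^*)$ keeps the system biorthogonal with no adjustment of what is already there. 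Doing this so that $F$ also captures the parts of $e_{n+1}$ and $e_{n+1}^*$ not already in $G_n$ completes the stage; in the limit, density of $(e_n)$ gives fundamentality, the capture of $(e_n^*)$ gives totality, and the norm bounds are as above.

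The genuine obstacle, and the reason Dvoretzky is indispensable, is the selection at each stage of a block space $F\subseteq Z_n$ that is simultaneously large, nearly Euclidean, and nearly orthogonal to the fixed finite-dimensional $G_n$ with constant close to $1$: in a Hilbert space one takes $F$ inside the orthogonal complement of $G_n$, but in a general Banach space, merely forcing the new functionals to vanish on $G_n$ inflates their norm by the norm of the projection onto $F$ along $G_n$, a quantity comparable to $\norm{I-P_n}$, which for a separable space without the bounded approximation property is necessarily unbounded along the construction and would destroy the $1+\epsilon$ bound. One therefore needs Dvoretzky-type statements producing near-Euclidean subspaces in controlled position relative to a prescribed finite-dimensional subspace. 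Compounding this is the rigidity of the capture step: the uncaptured part $(I-P_n)e_{n+1}$ of $e_{n+1}$ can lie very close to $G_n$ (again because $\norm{I-P_n}$ is large), so it cannot itself sit in a subspace nearly orthogonal to $G_n$ and must be absorbed by a more delicate device than the clean one above --- for instance by only reducing $\operatorname{dist}(e_{n+1},[x_1,\dots,x_{m_k}])$ gradually as $k$ grows, compensating with tiny modifications of the already-built vectors, all arranged to remain summable. Orchestrating the Dvoretzky sections and the fundamental/total bookkeeping so that every error stays within $\delta$ (respectively summable) is the technical heart of the proof.
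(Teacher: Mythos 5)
This theorem is quoted in the paper as a known result of Pe{\l}czy{\'n}ski \cite{P} and is not proved there, so there is no in-paper argument to compare yours against; I can only assess the sketch on its own terms. Your reduction is correct: for a fundamental minimal sequence the closed span of $\{x_m\}_{m\ne n}$ is a closed hyperplane equal to $\ker x_n^*$, and $\|x_n^*\|=|x_n^*(x_n)|/\operatorname{dist}(x_n,\ker x_n^*)$, so the whole problem is indeed to keep each normalized $x_n$ at distance at least $(1+\epsilon)^{-1}$ from the closed span of the others while capturing a dense sequence $(e_n)$ in the span of the vectors and a total sequence $(e_n^*)$ in the span of the functionals. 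This is the architecture of the actual proof, and Dvoretzky's theorem enters exactly where you say it does.

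However, the sketch has a genuine gap at the two steps that constitute the real content of the theorem, and you partly acknowledge this. First, the ``nearly orthogonal position'' you require of the Dvoretzky section $F$ relative to $G_n$, namely $\|f+g\|\ge(1-\delta)\|f\|$ for all $f\in F$ and $g\in G_n$, is not delivered by any off-the-shelf Dvoretzky-type statement: the standard Mazur argument (intersecting kernels of norming functionals for $G_n$) bounds the projection onto $G_n$ by $1+\delta$, which only yields $\|f+g\|\ge\|f\|/(2+\delta)$, and that factor of $2$ would destroy the $1+\epsilon$ bound. Second, the capture step: as you note, $(I-P_n)e_{n+1}$ can lie essentially inside $G_n$, so it cannot be placed in any almost-orthogonal $F$. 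The device that actually resolves both difficulties in \cite{P} (already present in Ovsepian--Pe{\l}czy{\'n}ski) is a ``hiding'' lemma: inside a $(1+\eta)$-Euclidean section of very large dimension $N$ one rotates the orthonormal basis so that the vector to be absorbed, and likewise the restrictions of the finitely many previously built functionals, are spread over all $N$ coordinates with coefficients of order $N^{-1/2}$; taking $N$ large makes both the perturbation of the basis vectors and the correction needed to restore biorthogonality (subtracting $\sum_i v_a^*(x_i)x_i^*$ from each new functional) as small as desired. Your proposed substitute --- reducing the distance gradually and compensating with tiny summable modifications --- does not identify this mechanism and is too vague to verify, so the sketch identifies the right strategy but does not yet amount to a proof.
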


\section{Subspaces of $c_0$.}

We will now construct an Auerbach basis in an arbitrary subspace of $c_0$ of finite codimension. Naturally we assume them to be closed (so that they are still Banach spaces).

\begin{thm}
Let $X$ be a subspace of $c_0$ of finite codimension. Then $X$ has an Auerbach basis.
\end{thm}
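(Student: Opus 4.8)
The plan is to split $\mathbb N$ into a large finite block $\{1,\dots,N\}$ and an infinite tail, to build most of the Auerbach basis on the tail as a tiny perturbation of the unit vector basis of $c_0$, and to handle the finite block with Auerbach's finite–dimensional theorem (quoted above).

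Write $X=\bigcap_{i=1}^{k}\ker f_i$ with $f_1,\dots,f_k\in\ell_1=c_0^{*}$ linearly independent. Since the $f_i$ are independent there are $k$ coordinates $B=\{b_1,\dots,b_k\}$ on which the matrix $M=[f_i(b_j)]_{i,j}$ is invertible; fix them once and for all. Because the $f_i$ lie in $\ell_1$, for $N$ large the tails $\delta(N):=\max_i\sum_{j>N}|f_i(j)|$ are arbitrarily small, and in particular $|f_i(n)|\le\delta(N)$ for every $n>N$. For each $n>N$ there is a unique $w_n$ supported on $B$ with $f_i(e_n+w_n)=0$ for all $i$, namely $w_n=-\sum_i f_i(n)\pi_i$, where $\pi_i\in\mathrm{span}(e_b:b\in B)$ is the $i$-th column of $M^{-1}$; hence $x_n:=e_n+w_n\in X$ with $\|w_n\|_\infty$ as small as we wish. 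Since the $n$-th coordinate of $x_n$ is $1$ and $w_n$ is supported on $B$ while $n\notin B$, we get $\|x_n\|_{c_0}=1$, the restricted coordinate functional $e_n^{*}|_X$ has norm $1$ (it attains $1$ at $x_n$), and $e_n^{*}|_X$ is biorthogonal to every $x_m$, $m>N$, because all corrections live in $B\subseteq\{1,\dots,N\}$.

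For the finite block put $W:=X\cap\mathrm{span}(e_1,\dots,e_N)$, of dimension $N-k$. By Auerbach's theorem it has an Auerbach basis $(v_l;u_l^{*})_{l=1}^{N-k}$, with $v_l\in W\subseteq c_0$, $\|v_l\|_{c_0}=1$, and $u_l^{*}\in W^{*}$, $\|u_l^{*}\|_{W^{*}}=1$. Extend each $u_l^{*}$ norm‑preservingly to $\bar g_l\in(\mathrm{span}(e_1,\dots,e_N))^{*}=\ell_1^{N}$; viewed inside $\ell_1$ this has $\|\bar g_l\|_{\ell_1}=1$, so $y_l^{*}:=\bar g_l|_X$ satisfies $\|y_l^{*}\|_{X^{*}}=1$ (it is $\le1$ by the $\ell_1$ bound and $\ge1$ because it restricts to $u_l^{*}$ on $W$), and $y_l^{*}(v_m)=\delta_{lm}$. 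As $v_l$ is supported in $\{1,\dots,N\}$ we have $e_n^{*}(v_l)=0$ for $n>N$, so the only biorthogonality defect of the combined system $\{(v_l;y_l^{*})\}\cup\{(x_n;e_n^{*}|_X)\}$ is $y_l^{*}(x_n)=\bar g_l(w_n)=:\varepsilon_{l,n}$, of order $\|w_n\|_\infty$. Remove it by replacing $x_n$ with $x_n':=x_n-\sum_l\varepsilon_{l,n}v_l\in X$: this leaves $e_m^{*}(x_n')=\delta_{mn}$ ($m>N$) and the $n$-th coordinate equal to $1$, and makes $y_l^{*}(x_n')=0$. Totality is then clear ($e_n^{*}(h)=0$ for all $n>N$ forces $h\in W$, where the finite Auerbach basis is total), and fundamentality follows because for $h\in X$ one checks, using $\sum_{n>N}|h(n)|\,|f_i(n)|\le\|h\|_\infty\sum_{n>N}|f_i(n)|<\infty$, that the partial sums $\sum_{n=N+1}^{R}h(n)x_n'$ converge in $c_0$ as $R\to\infty$ to a vector of the form $h-\sigma$ with $\sigma\in W$, so $h\in\overline{\mathrm{span}}$ of the system.

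I expect the real obstacle to be keeping every vector norm exactly equal to $1$ after the correction: $\|x_n'\|_{c_0}=\max\bigl(1,\|w_n-\sum_l\varepsilon_{l,n}v_l\|_\infty\bigr)$, and the correction is a sum of $N-k$ terms, so a crude estimate only yields $\|x_n'\|_{c_0}=1$ for $n$ beyond some $N_1=N_1(N)$, which would push the finitely many remaining indices into the finite block and raise the question of whether the scheme stabilises. Getting around this needs more than the mere smallness of $\delta(N)$: one should choose the pivot block $B$, the finite Auerbach basis, and the extensions $\bar g_l$ so that each $w_n$ interacts with only boundedly many $y_l^{*}$ — for instance by arranging the $\bar g_l$ to be, apart from the unavoidable $k$ ``bad'' directions coming from the $f_i$, supported away from $B$ — or to run the construction iteratively with telescoping control of the errors. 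The remaining verifications (biorthogonality, the two norm identities for the finite block, totality, fundamentality) are the routine bookkeeping sketched above.
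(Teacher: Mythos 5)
Your scheme has a genuine gap, and it is exactly the one you flag at the end: after the correction $x_n':=x_n-\sum_l\varepsilon_{l,n}v_l$ you can no longer guarantee $\|x_n'\|_{c_0}=1$ for \emph{all} $n>N$. The correction is a sum of $N-k$ terms each of size comparable to $\|w_n\|_\infty$, so you only get exact norm one for $n$ beyond some $N_1(N)$; enlarging the finite block to $\{1,\dots,N_1\}$ changes $W$, its Auerbach basis, and the extensions $\bar g_l$, hence changes $N_1$ again, and nothing in your argument shows this iteration terminates. The two remedies you suggest (supporting the $\bar g_l$ away from $B$, or a telescoping iteration) are not carried out, and the first one is in tension with the requirement $\|y_l^*\|_{X^*}=1$, which forces $\bar g_l$ to be a \emph{norm-preserving} extension and leaves you little freedom over its support. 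So as written the proof establishes a bounded Markushevich basis with constant close to $1$, but not an Auerbach basis.

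The paper closes this gap with an idea your proposal is missing, and it makes the whole finite-block apparatus unnecessary. Take the pivot set $B$ to have exactly $k$ elements (the codimension), and choose it not merely so that $M=[f_i(b_j)]$ is invertible but so that $|\det M|$ is \emph{maximal} among all $k\times k$ submatrices formed from columns of the coefficient matrix of $(f_1,\dots,f_k)$; such a maximizer exists because, by Hadamard's inequality and the decay of the columns (the $f_i$ are in $\ell_1$), only finitely many column choices can compete with any fixed nonsingular one. With this choice, Cramer's rule expresses each coordinate of $w_n$ as a ratio of two such subdeterminants, hence of modulus $\le 1$, so $\|e_n+w_n\|_{c_0}=1$ \emph{exactly} for every $n\notin B$ with no smallness of tails and no correction step. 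Moreover, after permuting $B$ to the first $k$ coordinates, $X\cap\mathrm{span}(e_1,\dots,e_k)=\{0\}$ because $M$ is nonsingular, so there is no finite-dimensional block $W$, no appeal to Auerbach's finite-dimensional theorem, and no cross-terms $\varepsilon_{l,n}$ to remove: the vectors $e_n+w_n$ together with the restricted coordinate functionals $e_n^*|_X$, $n>k$, already form the entire Auerbach basis. Your verification of totality and fundamentality for the tail part is essentially the same as the paper's (one must also check, as the paper does, that the expansion $\sum_n h(n+k)x_n$ converges, which again uses $\ell_1$-decay of the $f_i$), but the determinant-maximization is the essential ingredient you would need to import to make your construction yield norms exactly one.
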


\begin{proof}
We start with the case when $\codim X = 1$.
Let us take $f \in c_0^* = l_1$ such that $X = \ker f$. We can expand $f$ in the standard basis, namely $f=\sum\limits_{n=1}^{\infty}a_{n}e_{n}^{*}$. Where $e_n^*$ are coordinate functionals. By permuting indices, we can assume without a loss of generality, that $|a_{1}|=\sup_{n}|a_{n}|$ (supremum is attained and finite since the sequence converges to $0$). For $n = 1,2,...$ let us take
$$
x_{n}=e_{n+1}-\frac{a_{n+1}}{a_{1}}e_{1},
$$
$$
x_{n}^{*}=e_{n+1}^{*},
$$
it is clear that $x_{i}^{*}(x_{j})=\delta_{ij}$ and $\left\Vert x_{n}\right\Vert =\left\Vert x_{n}^{*}\right\Vert =1$ for all $n$. To check that $x_n$ spans the whole $X$, consider an arbitrary $y=(y_{1},y_{2},...)\in X$. We claim that $y=\sum\limits_{n=1}^{\infty}y_{n+1}x_{n}$.
The only non-trivial part is the equality on the first coordinate, however $f(y)=0$, so
$$y_{1}=\frac{-1}{a_{1}}\sum\limits_{n=2}^{\infty}y_{n}a_{n}$$
where the series is absolutely convergent because $\left(a_n\right) \in l_1$. Therefore
$$
\sum_{n=1}^{\infty}y_{n+1}x_{n}=\sum_{n=1}^{\infty}y_{n+1}e_{n+1}-e_{1}\frac{1}{a_{1}}\sum_{n=2}^{\infty}y_{n}a_{n}=\sum_{n=1}^{\infty}y_{n}e_{n}=y.
$$
Totality is also trivially satisfied. Let $y=(y_{1},y_{2},...)\in X$ and suppose that $x_{n}^{*}(y)=0$ for all $n$. This implies $y_{i}=0$ for $i>1$, therefore $f(y)=a_{1}y_{1}=0$ and finally $y_{1}=0$.

We will now generalise this approach to the case when $\codim X = n>1$.
Let
$$
X=\bigcap_{k=1}^{n}\ker f^{k},
$$
$$
f^{i}=\sum_{n=1}^{\infty}f_{n}^{i}e_{n}^{*},
$$
where $f^{j} \in l_{1}$ for $j=1,2,..,n$ are linearly independent functionals on $c_0$. We can now consider an infinite matrix 
$D=[f_{j}^{i}]_{1\leq j\leq\infty}^{1\leq i\leq n}$, the rows of which corresponds to the aforementioned functionals.
Let $A=[f_{j}^{i}]_{1\leq j\leq n}^{1\leq i\leq n}$ be a $n \times n$ matrix consisting of the first $n$ columns of $D$.
Suppose for a moment that $A$ maximizes the absolute value of determinant over all $n \times n$ matrices composed from the columns of $D$. In particular $\det A \neq 0$. We are now in position to construct the biorthogonal sequence. The idea is similar to the case $n=1$, namely we would like to have
$$
x_{k}^{*}=e_{n+k}^{*},
$$
$$
x_{k}=e_{n+k} + v(k),
$$
for $k=1,2,...$, where $v(k) \in \mathrm{span} (e_1,e_2,...,e_n)$. Fix $k$ for a moment and suppose $v(k)=(v_{1},v_{2},...,v_{n},0,...)$. The coordinates can now be computed as follows. The condition $x_k \in X$ implies that that $f^j(x_k)=0$ for $j=1,2,...,n$, thus we have $n$ equations of the form
$$
f_{1}^{j}v_{1}+f_{2}^{j}v_{2}+...+f_{n}^{j}v_{n} + f_{n+k}^{j} = 0.
$$
Rewriting these equations in terms of the matrix $A$, we have
$$
A
\begin{bmatrix}v_{1}\\
..\\
..\\
v_{n}
\end{bmatrix}
=
\begin{bmatrix}-f_{k+n}^{1}\\
..\\
..\\
-f_{k+n}^{n}
\end{bmatrix}.
$$
By using Cramer's formula we obtain
$$
v_{i}=\frac{\det A_{i}}{\det A},
$$
where $A_{i}$ is the matrix $A$ with the $i$-th column substituted for
$$
\begin{bmatrix}
-f_{n+k}^{1}\\
-f_{n+k}^{2}\\
..\\
-f_{n+k}^{n}
\end{bmatrix}.
$$
Notice, that $- \det A_{i}$ is the determinant of a matrix consisting of $n$ first columns of $D$ with $i$-th column substituted for the column with number $n+k$ in $D$. We have assumed that $A$ maximizes the absolute value of determinant over matrices consisting of columns of $D$. Therefore we have
$$
\left|v_{i}\right|=\left|\frac{\det A_{i} }{\det A}\right|\leq1,
$$
which implies that $\left\Vert x_{k}\right\Vert  =1$. The construction itself also ensures that $\left\Vert x_{k}^{*}\right\Vert =1$ and $x_{k}^{*}(x_{j})=\delta_{k,j}$. Note that for every $i$, the sequence $\{ x_k(i) \}_{k=1}^\infty$ is in $l_1$. Indeed, there is nothing to do if $n < i$, otherwise, let us take
$$
M=\sup_{k}\left| (f_{k}^{1},...,f_{k}^{n})\right|.
$$
$M$ is finite since $f^j \in l_1$ for every $1 \le j \le n$.
Using Hadamard inequality yields
$$
\left| x_{k}(i) \right|=\left|\frac{\det A_{i} }{\det A}\right|\leq \frac{M^{n-1}}{\left|\det A \right|} \left| (f_{n+k}^{1},...,f_{n+k}^{n}) \right|
\le \frac{M^{n-1}}{\left|\det A \right|} \left( \left|f_{n+k}^{1}\right| + ... + \left|f_{n+k}^{n}\right| \right).
$$

Let us check that $\{x_{k}\}_{k=1}^{\infty}$ spans the whole X. Consider an arbitrary $y=(y_{1},y_{2},...)\in X$ and set $\tilde{y} = \sum\limits_{k=1}^{\infty}y_{n+k}x_{k}$. This series makes sense because its coordinates are in $l_1$. Once again we claim that $y=\tilde{y}=\sum\limits_{k=1}^{\infty}y_{n+k}x_{k}$. It is clear that we have $y_k = \tilde{y}_k$ for $k>n$. Let us now justify the equality on the first $n$ coordinates. Since $y, \tilde{y} \in X$, we have $f^{j}(y)=f^{j}(\tilde{y})=0$, that is
$$
f_{1}^{j}y_{1}+f_{2}^{j}y_{2}+...+f_{n}^{j}y_{n} = -\sum\limits_{k=n+1}^{\infty}f_{k}^{j}y_{k},
$$
and
$$
f_{1}^{j}\tilde{y}_{1}+f_{2}^{j}\tilde{y}_{2}+...+f_{n}^{j}\tilde{y}_{n} = -\sum\limits_{k=n+1}^{\infty}f_{k}^{j}\tilde{y}_{k} = -\sum\limits_{k=n+1}^{\infty}f_{k}^{j}y_{k},
$$
for every $1\leq j\leq n$ and therefore the vectors $(y_1,y_2,...,y_n)$ and $(\tilde{y}_1,\tilde{y}_2,...,\tilde{y}_n)$ both satisfy the following system of equations
$$
Ax
=
\begin{bmatrix}
-\sum\limits_{k=n+1}^{\infty}f_{k}^{1}y_{k}\\
..\\
..\\
-\sum\limits_{k=n+1}^{\infty}f_{k}^{n}y_{k}
\end{bmatrix}.
$$
Since $A$ is nonsingular, this system has unique solution and the argument follows.

To check that $\{x_k^*\}$ is total, once more suppose $y=(y_{1},y_{2},...)\in X$ and $x_{n}^{*}(y)=0$ for all $k$. This implies $y_{i}=0$ for $i>n$. Once more we can construct a system of equations which determines $(y_1,y_2,...,y_n)$ and $y=0$ follows from the fact that $A$ is nonsingular.

To finish the proof, we need to justify our assumptions on the determinant of $A$. We will say that the matrix is maximal if it maximizes the absolute value of determinant among all $n \times n$ matrices composed from columns of $D$. From the independence of $\{f^{j}\}_{1 \le j \le n}$ we can find $n$ linearly independent columns of $D$. Let us denote by $B$ the matrix, consisting of this columns. There exists $m$ such that $B$ does not have columns with indices greater than $m$. Once again, recall that
$$
M=\sup_{k}\left| (f_{k}^{1},...,f_{k}^{n})\right|,
$$
is finite, since $f^{j} \in l_{1}$ for $j=1,2,..,n$. Moreover, we can select $N > m$ so large, that the following inequality will hold for any $v$ column of $D$ with number greater that $N$
$$
\left| v\right| < \frac{\left|\det B \right|}{M^{n-1}}.
$$
Once again recall Hadamard inequality, which implies that a maximal matrix cannot have columns with numbers greater than $N$. Therefore, to find a maximal matrix, one has to consider only the first $N$ columns of $D$. By permuting indices we could have assumed that the first $n$ columns of $D$ were maximal.
\end{proof}

\begin{rem}
The assumption that the codimension of $X$ is finite was crucial. In fact, the author does not know the answer in the case where both, dimension and codimension are infinite.
\end{rem}

\section{Continuous functions on a countable metric compact}

In this section, we start with an elementary construction in the space $c$. This simple idea will then be extended and the proof of the general case will follow from transfinite induction and a suitable isometric classification of $C(K)$ spaces.
\begin{prop}\label{cc}
Let $K$ be a countable metric compact with finitely many accumulation points. Then $C(K)$ has an Auerbach basis.
\end{prop}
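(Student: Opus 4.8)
\emph{The plan} is to reduce the statement to the single space $c$ of convergent sequences and then to exhibit an explicit Auerbach basis of $c$ built from a dyadic Haar system. For the reduction: since $K$ is a countable metric compact it is zero-dimensional (a countable metric space is totally disconnected, and a totally disconnected compact metric space has a base of clopen sets), so its finite set $K' = \{q_1,\dots,q_m\}$ of accumulation points can be enclosed in pairwise disjoint clopen sets $V_1 \ni q_1,\dots,V_m \ni q_m$; the remainder $F := K \setminus (V_1 \cup \dots \cup V_m)$ is then clopen with no accumulation point, hence finite (and the case $m=0$, $K$ finite, is Auerbach's theorem). Restriction of functions gives an isometry $C(K) \cong C(V_1) \oplus_\infty \dots \oplus_\infty C(V_m) \oplus_\infty C(F)$. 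Each $V_i$ is a countable compact metric space whose unique accumulation point is $q_i$, hence consists of $q_i$ together with a sequence of isolated points converging to $q_i$; so $V_i$ is homeomorphic to $\omega+1$ and $C(V_i)$ is isometric to $c$, while $C(F)$ is finite-dimensional. Since Auerbach bases pass to finite $\ell_\infty$-sums --- if $(y_n;y_n^*)$ and $(z_n;z_n^*)$ are Auerbach bases of $Y$ and $Z$ then, using $(Y\oplus_\infty Z)^* = Y^*\oplus_1 Z^*$, the vectors $(y_n,0),(0,z_n)$ with functionals $(y_n^*,0),(0,z_n^*)$ form one of $Y\oplus_\infty Z$ --- and $C(F)$ has an Auerbach basis by Auerbach's theorem, it suffices to produce an Auerbach basis of $c$.

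Identify $c$ with $C(\N\cup\{\infty\})$, so that $x=(x_n)_{n\ge1}\in c$ has limit $x_\infty$ and $c^* = l_1(\N\cup\{\infty\})$ with total-variation norm. Put $x_0 := (1,1,1,\dots)$ and $x_0^* := \delta_\infty$, the limit functional; these have norm $1$ and $x_0^*(x_0)=1$. For the rest, regard $\N$ as the set of leaves of the binary tree whose level-$k$ nodes are the dyadic blocks $\{(j-1)2^k+1,\dots,j2^k\}$ ($j\ge1$), each internal node $v$ (level $k\ge1$) being the disjoint union $v = v_L \sqcup v_R$ of its two halves of size $2^{k-1}$. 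For every internal node $v$ set
$$x_v := \1_{v_L} - \1_{v_R}, \qquad x_v^*(x) := \frac{1}{|v|}\Bigl(\sum_{i\in v_L}x_i - \sum_{i\in v_R}x_i\Bigr).$$
Then $\|x_v\|_\infty = 1$, $\|x_v^*\|_{l_1} = 1$, and the coefficients of $x_v^*$ sum to zero; hence $x_v^*(x_0) = 0$, while $x_0^*(x_v) = 0$ since $x_v \in c_0$. My claim is that $(\{x_0\}\cup\{x_v : v\ \text{internal}\};\ \{x_0^*\}\cup\{x_v^*\})$, suitably enumerated as a sequence, is an Auerbach basis of $c$.

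The remaining verification is routine, and I would carry it out as follows. Biorthogonality of the pairs $(x_v,x_v^*)$ is a short case check: two tree nodes $v,w$ are either disjoint subsets of $\N$ --- then $x_w$ is supported off $v$ and $x_v^*(x_w)=0$ --- or nested; if $w\subsetneq v$ then $x_w$ is supported inside $v_L$ or inside $v_R$ and $x_v^*(x_w)=\pm|v|^{-1}\sum_i (x_w)_i = 0$ since the coordinates of $x_w$ sum to zero, while if $v\subsetneq w$ then $x_w\equiv\pm1$ on $v$ and $x_v^*(x_w)=\pm|v|^{-1}(|v_L|-|v_R|)=0$ because the tree is balanced; and $x_v^*(x_v)=1$ directly. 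Totality: if $x_0^*(x)=0$ then $x\in c_0$, and then $x_v^*(x)=0$ for all $v$ forces $\sum_{i\in v_L}x_i=\sum_{i\in v_R}x_i$ at every internal node, hence $x$ is constant on every dyadic block, hence constant on $\N$, hence $x=0$. Fundamentality is the dual computation: any $\phi=(c_i)\in l_1$ annihilating every $x_v$ is constant on every dyadic block, hence a constant sequence, hence $0$; adjoining $x_0$ then gives $\overline{\mathrm{span}}(\{x_0\}\cup\{x_v\})\supseteq c_0 + \R x_0 = c$.

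The one genuinely delicate point --- and the reason a Gram--Schmidt or summing-basis construction does not suffice --- is keeping every functional of norm \emph{exactly} $1$: a functional biorthogonal to $x_0=(1,1,\dots)$ must have coefficient sum zero, and for such a functional of $l_1$-norm one the biorthogonal vector is typically forced out of the unit ball (this is what defeats the difference bases of $c$, whose dual vectors acquire a coordinate equal to $2$). The balanced dyadic Haar system is chosen precisely so that the positive and negative parts of each $x_v^*$ are equally spread out, which keeps every $x_v$ in the unit ball while leaving the system complete and total.
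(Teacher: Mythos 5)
Your proof is correct, but its core --- the Auerbach basis of $c$ --- is genuinely different from the paper's. The reduction is the same in spirit (split $K$ into pairwise disjoint clopen pieces, one around each accumulation point), though you are more careful than the paper in handling the finite clopen remainder and in recording explicitly that Auerbach bases pass to finite $\ell_\infty$-sums via $(Y\oplus_\infty Z)^*=Y^*\oplus_1 Z^*$. Where you diverge is the basis of $c$ itself: the paper takes the ``tail'' functions $x_j=-\1_{z_j}+\1_{Z_j}$ paired with geometrically weighted functionals $-\tfrac12\delta_{z_j}+\sum_{k>j}2^{j-k-1}\delta_{z_k}$, a system indexed by $\N$ that is lower-triangular with respect to the standard coordinates, so fundamentality is read off from a single nonsingular matrix; you instead take a balanced dyadic Haar system indexed by the internal nodes of a binary tree, with flat weights $|v|^{-1}$. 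Your version trades that triangular structure for a disjoint-or-nested case analysis in the biorthogonality check, but in exchange the norm-one property of both vectors and functionals is immediate from symmetry rather than from summing a geometric series, and your choice $x_0^*=\delta_\infty$ is cleaner than the paper's $\sum_k 2^{-k}\delta_{z_k}$. All the verifications you sketch (biorthogonality by the three cases, totality and fundamentality by the induction on levels showing block-constancy, and the annihilator computation identifying $\overline{\mathrm{span}}\{x_v\}$ with $c_0$) do go through. One remark: the paper's tail construction is not an arbitrary choice --- it is exactly the template that is upgraded in Lemma \ref{key} to drive the transfinite induction for general countable $K$ --- whereas your Haar system, though perfectly adequate for this proposition, is not the form reused later.
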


\begin{proof}
Let $n$ be the number of accumulation points. We will first consider the case when $n=1$ (so we are looking at the space $c$).
Assume that $K = \{z_k\}_{k=1}^{\infty} \cup \{ z \}$ where $z_k \rightarrow z$. Define $Z_j = \{z_k\}_{k=j+1}^{\infty} \cup \{z\}$. Now take $x_0 \equiv 1$, $x_0^* = \sum_{k=1}^{\infty} \delta_{z_k} 2^{-k}$, and for $j=1,2,...$ 
$$
x_j  =  - \1_{z_j} + \1_{Z_j},
$$
$$
x_j^* = \sum_{k=j}^{\infty} x_j(z_k) \delta_{z_k} 2^{j-k-1} = -\frac{\delta_{z_j}}{2} + \sum_{k=j+1}^{\infty} \delta_{z_k} 2^{j-k-1}.
$$
It is clear that $\| x_j \| = \|x_j^*\| = 1$ and the sequence $\left( x_j ; x_j^* \right)_{j=0}^{\infty}$ is biorthogonal. Fundamentality becomes clear when one realizes that the matrix
$$
\left( \begin{array}{cccccc}
1 & 1 & 1 & ... & 1 \\
-1 & 1 & 1 & ... & 1 \\
0  & -1 & 1 & ... & 1 \\
...  &  ... & ... & ... & ... \\
0 & 0 & 0 & -1 & 1 \end{array} \right)
$$
is nonsingular and therefore every $x \in C(K)$ with finite amount of values can be expressed as a linear combination of $\{x_j\}_{j=1}^{\infty}$. In order to check totality, take $y \in X$ such that $x_j^*(y) = 0$ for $j=0,1,2,...$. To simplify notation, set $y_j = y(z_j)$. We now arrive at the following system of equations
$$
\sum_{k=1}^{\infty} y_k 2^{-k} = 0
$$
and
$$
-y_j + \sum_{k=j+1}^{\infty} y_k 2^{j-k-1} =0
$$
for $j=1,2,...$. It is elementary to check that this forces $y_j=0$ and consequently $y \equiv 0$.

For the case $n>1$ split $K$ into disjoint copies $K_1,K_2,...,K_n$ where each $K_j$ is just as in the case $n=1$ and apply the same construction.
\end{proof}

\begin{prop}
Let $X$ be a Banach space with Auerbach basis, then $c(X)$ has an Auerbach basis.
\end{prop}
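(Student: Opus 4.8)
\emph{Proof idea.} The plan is to obtain an Auerbach basis of $c(X)$ as a ``tensor product'' of an Auerbach basis of $X$ with the Auerbach basis of $c$ furnished by the case $n=1$ of Proposition~\ref{cc}. Throughout, $c(X)$ denotes the Banach space of norm-convergent sequences $(x_n)_{n\ge 1}$ in $X$ with the supremum norm, and for such a sequence I write $x_\infty=\lim_n x_n$. Fix an Auerbach basis $(u_k;u_k^*)_{k=1}^\infty$ of $X$ and an Auerbach basis $(f_j;f_j^*)_{j=0}^\infty$ of $c$, the latter existing by Proposition~\ref{cc}. I view each $f_j^*\in c^*=\ell_1$ as a family of scalars $\big(f_j^*(m)\big)_{m\in\{1,2,\dots\}\cup\{\infty\}}$ with $\sum_m|f_j^*(m)|=\|f_j^*\|=1$, acting on $\xi\in c$ by $f_j^*(\xi)=f_j^*(\infty)\,\xi_\infty+\sum_{m\ge 1}f_j^*(m)\,\xi_m$.

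For each pair $(j,k)$ with $j\ge 0$, $k\ge 1$, I would set
$$ w_{j,k}=\big(f_j(n)\,u_k\big)_{n\ge 1}\in c(X),\qquad w_{j,k}^*\big((x_n)_n\big)=f_j^*(\infty)\,u_k^*(x_\infty)+\sum_{m\ge 1}f_j^*(m)\,u_k^*(x_m). $$
Since $\|u_k^*\|=1$, the series defining $w_{j,k}^*$ converges absolutely and $\big|w_{j,k}^*((x_n))\big|\le\big(\sum_m|f_j^*(m)|\big)\sup_n\|x_n\|=\sup_n\|x_n\|$, so $w_{j,k}^*\in c(X)^*$ with $\|w_{j,k}^*\|\le 1$; likewise $\|w_{j,k}\|=\|f_j\|_c\|u_k\|=1$. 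A short computation unwinding the definitions gives $w_{j,k}^*(w_{j',k'})=u_k^*(u_{k'})\,f_j^*(f_{j'})=\delta_{kk'}\delta_{jj'}$, and then biorthogonality together with $\|w_{j,k}\|=1$ forces $\|w_{j,k}^*\|=1$ as well. As the index set $\{(j,k)\}$ is countable, fixing a bijection with $\mathbb N$ presents $(w_{j,k};w_{j,k}^*)$ as a sequence of the required form.

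It remains to check totality and fundamentality. Totality ``splits'': if $y=(y_n)_n\in c(X)$ annihilates all $w_{j,k}^*$, then for each fixed $k$ the scalar sequence $\big(u_k^*(y_n)\big)_{n\ge 1}\in c$ is killed by every $f_j^*$, hence vanishes by totality of $(f_j^*)_j$ in $c$; thus $u_k^*(y_n)=0$ for all $n,k$, and totality of $(u_k^*)_k$ in $X$ yields $y_n=0$ for all $n$, i.e. $y=0$. For fundamentality I would argue in two nested approximation steps: first, every $y\in c(X)$ is approximated in norm by a sequence that is eventually equal to $y_\infty$ (truncate at a sufficiently large index), and any eventually-constant sequence is a \emph{finite} linear combination of elementary tensors $\xi\otimes v$ (i.e. the sequence $n\mapsto\xi_n v$) with $\xi$ the constant sequence $1$ or a unit sequence of $c$ and $v\in X$; second, each $\xi\otimes v$ is approximated by $\widehat\xi\otimes\widehat v$ with $\widehat\xi\in\operatorname{span}\{f_j\}$, $\widehat v\in\operatorname{span}\{u_k\}$, using fundamentality of $(f_j)$ in $c$ and $(u_k)$ in $X$ together with the estimate $\|\xi\otimes v-\widehat\xi\otimes\widehat v\|\le\|\xi-\widehat\xi\|\,\|v\|+\|\widehat\xi\|\,\|v-\widehat v\|$. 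Since $\widehat\xi\otimes\widehat v\in\operatorname{span}\{w_{j,k}\}$, chaining the two approximations gives $y\in\overline{\operatorname{span}\{w_{j,k}\}}$, so the system is fundamental.

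The only real obstacle is bookkeeping in this last step: one has to choose the truncation index and then the finitely many scalar/vector expansions so that the accumulated errors stay below a prescribed $\varepsilon$, using that only finitely many elementary tensors arise. Everything else — the two norm identities, biorthogonality, and totality — is an immediate consequence of the corresponding properties of the Auerbach bases of $X$ and of $c$ with which we started.
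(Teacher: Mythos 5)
Your construction is correct and is essentially the paper's own: both take the ``tensor product'' $w_{j,k}=f_j\otimes u_k$, $w_{j,k}^{*}=f_j^{*}\otimes u_k^{*}$ of the Auerbach basis of $c$ from Proposition~\ref{cc} with the given basis of $X$, and verify the norm identities and biorthogonality directly, totality by splitting through the two factors, and fundamentality by first truncating to an eventually constant sequence and then approximating finitely many elementary tensors. One small point in your favour: you correctly include the index $j=0$ (the constant function of $c$), whereas the paper's phrase ``$n,m=1,2,\dots$'' is a slip --- without $v_0\otimes x_m$ the system would not be fundamental, since the constant sequence $\mathbf{1}$ does not lie in the closed span of $\{v_n\}_{n\ge 1}$.
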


\begin{proof}
Suppose that $\left( x_m ; x_m^* \right)_{m=1}^{\infty}$ forms an Auerbach basis in $X$. Set 
$$
v_0 = \sum_{k=1}^{\infty} e_k,
$$
$$
v_0^* =  \sum_{k=1}^{\infty} e_k^* 2^{-k}.
$$
and consider the following sequences
$$
v_n= - e_k + \sum_{k=n+1}^{\infty} e_k,
$$
$$
v_n^* = \sum_{k=1}^{\infty} v_n(k) e_k^* 2^{n-k-1} = -\frac{e_n^*}{2} +  \sum_{k=n+1}^{\infty} e_k^* 2^{n-k-1} .
$$
where $n=1,2,...$.
Now we can define
$$
x_{n,m} = v_n \cdot x_m,
$$
$$
x_{n,m}^* = v_n^* \cdot x_m^*,
$$
where $\cdot$ is pointwise product and $n,m = 1,2,...$. It is obvious that $\| x_{n,m} \| = \| x_{n,m}^* \| = 1$ and this sequence is biorthogonal. To check that it is fundamental, consider $\epsilon > 0$ and an arbitrary sequence $\left(y_1,y_2,...\right) = y \in c(X)$. For a sufficiently large index $N$, we have
$$ \| y - (y_1,y_2,...,y_N,y_N,...)\| < \epsilon /2.$$
Now, for each $y_j$, where $j=1,2,..,N$ we can find $\alpha_j \in c_{00}$, such that 
$$
\| y_j - \sum_{k=1}^{\infty} \alpha_j(k) x_k\| < \epsilon/2.
$$
Set $\alpha_j(k) = \alpha_N(k)$ for $j > N$, $k=1,2,...$ and let $M$ be an integer guaranteeing that $\alpha_j(k) = 0$ for $j=1,2,...$ whenever $k>M$. Recalling the matrix from Preposition \ref{cc}, one sees that for every $k$, we can find $\left\{\beta_{j,k}\right\}_{j=1}^{\infty} \in c_{00}$, such that
$$
\tilde{x}_k := \sum_{j=1}^{\infty} \beta_{j,k} x_{j,k}
$$
satisfies $\tilde{x}_k (j) =  \alpha_j (k)$ for $j=1,2,...$
Take 
$$
x = \sum_{k=1}^{M} \tilde{x}_k = \sum_{k=1}^{M} \sum_{j=1}^{\infty} \beta_{j,k} x_{j,k}.
$$ 
From the construction, we see that $x$ lies in the span of $\left\{x_{n,m}\right\}_{n,m=1}^{\infty}$ and $\| y - x \| < \epsilon$. In order to prove that the sequence is total, fix $m$. Arguing similarly as in Prepositon \ref{cc}, one sees that if $x_{n,m}^* (y) = 0$ for every $n = 1,2,...$, then $x_m^*(y_k) = 0$ for every $k=1,2,...$. Since $\left\{x_m^* \right\}_{m=1}^{\infty}$ was total, the claim follows.
\end{proof}

Before going any further, let us recall some basic facts about $C(K)$ spaces, where $K$ is countable compact metric space. For a complete survey, see \cite{ROS}.

\begin{thm*}
For every $K$ infinite countable compact metric space, there exists a unique ordinal $1 \le \alpha < \omega_1$ and a unique $n \in \N$ such that the space $C(K)$ is isometric to $\underbrace{C\left(\omega^\alpha+\right) \oplus ... \oplus C\left(\omega^\alpha+\right)}_\text{$n$}$ where the direct sum is taken in the supremum norm.
\end{thm*}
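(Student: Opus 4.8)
The plan is to obtain the statement from two classical ingredients --- the Mazurkiewicz--Sierpi\'nski topological classification of countable compact metric spaces, and the Banach--Stone theorem --- the remaining work being an elementary decomposition. Write $K^{(\beta)}$ for the iterated Cantor--Bendixson derivatives of $K$. Since $K$ is countable and compact, these derivatives are eventually finite, so there is a least ordinal $\alpha$ with $K^{(\alpha)}$ finite; let $n$ be the number of points of $K^{(\alpha)}$, so $n\ge 1$, $K^{(\alpha+1)}=\emptyset$, and $1\le\alpha<\omega_1$ because $K$ is infinite and countable. The ordinals $\alpha$ and $n$ are visibly topological invariants of $K$. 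The first input I would quote (it is in \cite{ROS}) is that every countable compact metric space is homeomorphic to a compact ordinal interval, and that in the normal form above this interval may be taken to be $[0,\omega^\alpha\cdot n]$ (equivalently, the ordinal $\omega^\alpha n+1$); moreover one checks by transfinite induction on the Cantor--Bendixson rank that the invariants of $[0,\omega^\alpha n]$, computed as above, are exactly $\alpha$ and $n$. Hence two such spaces $K,K'$ are homeomorphic precisely when $(\alpha_K,n_K)=(\alpha_{K'},n_{K'})$.

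The second step is a purely topological decomposition. The interval $[0,\omega^\alpha\cdot n]$ is the disjoint union of the $n$ subsets
$$P_1=[0,\omega^\alpha],\qquad P_k=[\omega^\alpha(k-1)+1,\;\omega^\alpha k]\quad(2\le k\le n),$$
and I claim each $P_k$ is clopen and homeomorphic to $\omega^\alpha+$ (that is, to $[0,\omega^\alpha]$). Clopenness is immediate because an initial segment $[0,\nu]$ of any ordinal interval is always clopen (its only possibly non-isolated point $\nu$, if a limit, has the neighbourhood $(\beta,\nu+1)\subseteq[0,\nu]$), and $P_k$ for $k\ge2$ is obtained from such a segment by deleting a smaller one. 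For the homeomorphism, $P_k$ with $k\ge2$ is order-isomorphic to $[1,\omega^\alpha]$, and $[1,\omega^\alpha]\cong[0,\omega^\alpha]$ since $\alpha\ge1$: the leading isolated point $0$ of $[0,\omega^\alpha]$ can be absorbed (this is $1+\omega^\alpha=\omega^\alpha$ in disguise), the only subtlety being the non-commutativity of ordinal addition.

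Existence then follows. For compact spaces $L_1,L_2$ the restriction map $f\mapsto(f|_{L_1},f|_{L_2})$ is a linear bijection of $C(L_1\sqcup L_2)$ onto $C(L_1)\oplus C(L_2)$, and it is isometric for the supremum norm since $\sup_{L_1\sqcup L_2}|f|=\max\{\sup_{L_1}|f|,\sup_{L_2}|f|\}$. Applying this to the clopen partition $P_1,\dots,P_n$, iterating, and using Step~1 to replace $K$ by $[0,\omega^\alpha n]$, we get the isometry $C(K)\cong\underbrace{C(\omega^\alpha+)\oplus\dots\oplus C(\omega^\alpha+)}_{n}$. For uniqueness, suppose $\bigoplus^{n}C(\omega^\alpha+)$ and $\bigoplus^{n'}C(\omega^{\alpha'}+)$ are isometric. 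By the same isometry, each side is $C(L)$ and $C(L')$ for compact metric spaces $L\cong[0,\omega^\alpha n]$ and $L'\cong[0,\omega^{\alpha'}n']$; the Banach--Stone theorem promotes the isometry to a homeomorphism $L\cong L'$, and Step~1 then forces $\alpha=\alpha'$ and $n=n'$. The main obstacle is Step~1: granting the Mazurkiewicz--Sierpi\'nski classification the rest is routine, but proving that classification from scratch --- a transfinite induction on the Cantor--Bendixson rank, with the successor and limit stages handled separately --- is the genuinely substantial part of the argument.
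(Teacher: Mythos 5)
The paper does not actually prove this statement: it imports it verbatim from the survey \cite{ROS} as a known isometric classification, so there is no in-paper argument to compare yours against. Your reconstruction is the standard proof and is correct as far as it goes. The Mazurkiewicz--Sierpi\'nski theorem identifies $K$ with $[0,\omega^\alpha\cdot n]$, where $\alpha$ and $n$ are the Cantor--Bendixson invariants (your compactness argument that $K^{(\alpha)}$ is finite \emph{and nonempty} at the least such $\alpha$ is right, and is the point people most often skip); the partition into $P_1,\dots,P_n$ is indeed clopen, each piece is order-isomorphic and hence homeomorphic to $[0,\omega^\alpha]$ via $1+\omega^\alpha=\omega^\alpha$, and restriction to a clopen partition is a surjective linear isometry onto the $\sup$-norm direct sum; Banach--Stone plus the topological invariance of $(\alpha,n)$ then gives uniqueness. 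The only caveat is that, like the paper, you ultimately defer the genuinely substantial ingredient --- the Mazurkiewicz--Sierpi\'nski classification itself --- to the literature; you are explicit about this, and everything you build on top of it is sound. So your write-up is a correct expansion of what the paper merely cites, not a divergence from it.
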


Let us also recall the following construction from \cite{ROS}.
Set $X = \oplus_{n=1}^{\infty} X_n$ where each $X_n = C(K_n)$ for some compact metric space $K_n$. By $c_0 (X) \oplus 1$ we will denote the "unitization" of $c_0 (X)$. That means we consider pairs $\left(\{x_n\}_{n=1}^\infty, c\right) \in X \times \R$ with the norm 
$$\left\| \left(\{x_n\}_{n=1}^\infty,c\right) \right\| = \sup_n \sup_{\omega \in K_n} | x_n(\omega) + c |.$$

Now define $Y_\alpha$ as follows. Let $Y_1 = c_0 \oplus 1 = c$. Now, we can proceed by induction. If $\beta = \alpha + 1$ set $Y_\beta = c_0(Y_\alpha) \oplus 1$ and if $\beta$ is a limit ordinal, choose $\alpha_n \nearrow \beta$ and set $Y_\beta = \left(Y_{\alpha_1} \oplus Y_{\alpha_2} \oplus ... \right)_{c_0} \oplus 1$. 

It turns out that these spaces can be seen as building blocks for $C(K)$ spaces. Namely, we can reformulate the previous theorem as follows (see \cite{ROS}).

\begin{thm*}
For every $K$ infinite countable compact metric space, there exists a unique ordinal $1 \le \alpha < \omega_1$ and a unique $n \in \N$ such that the space $C(K)$ is isometric to $\underbrace{Y_\alpha \oplus ... \oplus Y_\alpha}_\text{$n$}$ where the direct sum is taken in the supremum norm.
\end{thm*}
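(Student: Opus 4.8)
The plan is to deduce this from the classification theorem stated just above it: once every $C(K)$ is isometric to a finite supremum-norm sum $\underbrace{C(\omega^\alpha+)\oplus\cdots\oplus C(\omega^\alpha+)}_n$, it is enough to prove that the single building block $C(\omega^\alpha+)$ is isometrically isomorphic to $Y_\alpha$ for every countable ordinal $\alpha\ge1$ (here $\omega^\alpha+$ is the compact ordinal $[0,\omega^\alpha]$ in its order topology); uniqueness of $\alpha$ and $n$ then transfers verbatim. I would prove $C(\omega^\alpha+)\cong Y_\alpha$ by transfinite induction on $\alpha$, and --- this is worth building into the statement from the start --- I would prove the stronger assertion that there is an isometry $T_\alpha\colon C(\omega^\alpha+)\to Y_\alpha$ carrying the constant function $\1$ to the distinguished unit of $Y_\alpha$ (the scalar in the last coordinate of each unitization). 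That extra bookkeeping is exactly what makes the two recursions line up: the ``$\oplus 1$'' in $Y_\beta=c_0(Y_\alpha)\oplus1$ is literally ``adjoin a constant'', so the induction must transport constants to constants.

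For the base case $\alpha=1$, the space $\omega^1+=\omega+1$ is a convergent sequence together with its limit, hence $C(\omega+1)=c=c_0\oplus1=Y_1$, with $\1$ going to the unit. For the successor case $\beta=\alpha+1$ I would use $\omega^{\alpha+1}=\omega^\alpha\cdot\omega=\sup_k\omega^\alpha\cdot k$ to write $[0,\omega^{\alpha+1}]$ as the one-point compactification of the disjoint union of the clopen blocks $[0,\omega^\alpha]$ and $(\omega^\alpha\cdot k,\omega^\alpha\cdot(k+1)]$, $k\ge1$, each of which is order-isomorphic (hence homeomorphic) to $[0,\omega^\alpha]$ because $\omega^\alpha$ is additively indecomposable. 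A function $f$ is continuous on $[0,\omega^{\alpha+1}]$ precisely when its restrictions to the successive blocks are continuous and converge uniformly to $c:=f(\omega^{\alpha+1})$; writing $h_k$ for the $k$-th restriction minus $c$, this says $h_k\in C(\omega^\alpha+)$ and $\|h_k\|\to0$, and $\|f\|=\sup_k\|h_k+c\|=\sup_k\sup_\omega|h_k(\omega)+c|$. Applying the inductive isometry $T_\alpha$ to each $h_k$ identifies $C(\omega^{\alpha+1}+)$ isometrically with the unitization $c_0(Y_\alpha)\oplus1=Y_{\alpha+1}$, and $\1$ corresponds to the unit, closing the step.

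For a limit ordinal $\beta$, fix $\alpha_k\nearrow\beta$, so $\omega^\beta=\sup_k\omega^{\alpha_k}$. Using additive indecomposability again, $[0,\omega^\beta]$ is the one-point compactification of the disjoint union of the clopen blocks $I_1=[0,\omega^{\alpha_1}]$ and $I_k=(\omega^{\alpha_{k-1}},\omega^{\alpha_k}]$ for $k\ge2$, with $I_k$ order-isomorphic to $[0,\omega^{\alpha_k}]$. Continuity of $f$ at the top point forces $f|_{I_k}-f(\omega^\beta)\to0$ in supremum norm, so the maps $T_{\alpha_k}$ identify $C(\omega^\beta+)$ isometrically with $(Y_{\alpha_1}\oplus Y_{\alpha_2}\oplus\cdots)_{c_0}\oplus1=Y_\beta$, again sending $\1$ to the unit; independence of $Y_\beta$ from the chosen ladder $(\alpha_k)$ is inherited from the uniqueness clause of the previous theorem (or is checked directly by re-indexing).

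The work is not in any single deep step but in the interplay of three routine-but-error-prone points, and I would isolate each before running the induction: (i) the ordinal-arithmetic fact that $\gamma+\omega^\delta=\omega^\delta$ whenever $\gamma<\omega^\delta$, which is what lets every block be identified order-theoretically with $[0,\omega^\delta]$ and is used in both the successor and the limit step; (ii) keeping the induction hypothesis \emph{unital} --- tracking the image of the constant function --- so that the unitization and $c_0$-sum norms truly coincide rather than being merely isomorphic, which is where a careless argument collapses to an isomorphism only; and (iii) in the limit step, confirming that $Y_\beta$ does not depend on the sequence $\alpha_k\nearrow\beta$ and that the bijection respects the $c_0$-sum norm. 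The unital refinement of (ii) is the one I expect to be the genuine obstacle, which is why I would carry it through the induction from the outset rather than add it at the end.
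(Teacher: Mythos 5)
The paper does not actually prove this statement: it is quoted from \cite{ROS} as a reformulation of the preceding classification theorem, so there is no in-paper argument to compare yours against. Your proposal supplies the missing (standard) derivation, and it is correct: reducing to the single claim that $C(\omega^\alpha+)$ is isometric to $Y_\alpha$, running a transfinite induction, and decomposing $[0,\omega^{\alpha+1}]$ (resp.\ $[0,\omega^{\beta}]$ for limit $\beta$) into clopen blocks each order-isomorphic to $[0,\omega^{\alpha}]$ (resp.\ $[0,\omega^{\alpha_k}]$) via additive indecomposability is exactly how this identification is established in the literature. Your two points of emphasis are well chosen: the unital induction hypothesis $T_\alpha(\1)=(\mathbf{0},1)$ is genuinely needed, because the paper's unitization norm $\sup_n\sup_{\omega}|x_n(\omega)+c|$ only makes literal sense once each summand is realized as a $C(K_n)$ with a distinguished constant, and without tracking the unit one gets an isomorphism rather than an isometry; and the norm computation $\|f\|=\sup_k\sup_\omega|h_k(\omega)+c|$ does match the unitization norm because $\|h_k\|\to 0$ forces $\sup_k\|h_k+c\1\|\ge|c|$. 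The one loose end you correctly flag but leave to the reader is the independence of $Y_\beta$ from the chosen ladder $\alpha_k\nearrow\beta$; deducing it from the uniqueness clause of the first theorem (after the isometry $C(\omega^\beta+)\cong Y_\beta$ is in hand for one ladder) is circular-looking but fine if phrased carefully, since any two ladders yield spaces both isometric to $C(\omega^\beta+)$. What your route buys over the paper's citation is self-containedness; what it costs is that you still lean on the cited Bessaga--Pe{\l}czy{\'n}ski-type classification for existence and uniqueness of $\alpha$ and $n$, which is unavoidable at this level.
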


It is now clear that in order to construct an Auerbach basis in an arbitrary $C(K)$ space for $K$ countable compact metric space, it is enough to consider the case $Y_\alpha$ or $C\left(\omega^\alpha+\right)$. It turns out, that the rudimentary construction from Preposition \ref{cc} can be easily extended to the general case. Before giving the proof, let us provide an illuminating example.
\begin{emp}
Consider $K_j = \left\{ \frac{1}{n} \mid n \ge j \right\} \cup \{ 0 \}$ and set 
$$
K = \left(0,0\right) \cup \bigcup_{j \ge 1} \left\{ \frac{1}{j} \right\} \times K_j.
$$
$K \in \R^2$ with the euclidean distance is a compact metric space and $C(K)$ is isometric to $\left(c \oplus c \oplus ... \right)_{c_0} \oplus 1$ and $C(\omega^2+)$, while $C(K_j)$ is isometric to $c$. In \ref{cc} we have constructed an Auerbach basis $\left(x_k;x_k^*\right)_{k=0}^{\infty}$ of $c$ such that $x_0 \equiv 1$. Therefore we can proceed as follows. For each $K_j$ take the sequence $\left(x_k;x_k^*\right)_{k=0}^{\infty}$ and let $\left(x_{k,j};x_{k,j}^*\right)_{k=0}^{\infty}$ be its extension by zero to the whole $K$. Now once again consider the basis $\left(x_k;x_k^*\right)_{k=0}^{\infty}$ but now treat the sequence $\left\{x_k(j)\right\}_{j=1}^{\infty}$ as a function on $C(K)$ which is constant on each $K_j$ and equal to $x_k(j)$. We can proceed similarly for the functionals and obtain a formula
$$
x_k = \sum_{j=1}^{\infty} x_k(j) \1_{K_j} = \sum_{j=1}^{\infty} x_k(j) x_{0,j},
$$
$$
x_k^* = \sum_{j=1}^{\infty}  2^{-j} x_k^*(j) x_{0,j}^*.
$$
We claim that $\left(x_k;x_k^*\right)_{k=0}^{\infty} \cup \bigcup_{j \ge 1} \left(x_{k,j};x_{k,j}^*\right)_{k=1}^{\infty}$ is an Auerbach basis in $C(K)$ (notice that we omit the functions $x_{0,j}$). Indeed, it is clear that all the vectors and all the functionals have norm $1$. Biorthogonality follows from the properties of underlying basis of $c$. In order to check fundamentality, a moment of reflection should convince us that every function $f \in C(K)$ with finite amount of values can be expressed as a linear combination of vectors from the proposed basis. To see that the basis is total it is enough to understand that if $x_{k,j}^*(f)=0$ for $k=1,2,...$, then $f$ is constant on $K_j$ and if a function is constant on each $K_j$ we may treat it as an element of $c$ and use totality of $\{x_k^*\}_{k=0}^{\infty}$ to conclude that the function is zero on the whole $K$.
\end{emp}

A similar procedure will now allow us to prove the key lemma.

\begin{lem}\label{key}
For $j=1,2,...$ let $K_j$ be a metric space and suppose that $\left(f_{k,j};f_{k,j}^*\right)_{k=0}^{\infty}$ is an Auerbach basis in $C(K_j)$ such that $f_{0,j} \equiv 1$. Then the space 
$$
X := \left(C(K_1) \oplus C(K_2) \oplus ... \right)_{c_0} \oplus 1
$$
has an Auerbach basis $\left(x_{k};x_{k}^*\right)_{k=0}^{\infty}$ such that  $x_0  = \left(\mathbf{0}, 1\right)$.
\end{lem}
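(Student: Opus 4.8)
The plan is to split $X$ into "inter-block" directions, which I will describe by the Auerbach basis of $c$ from Proposition~\ref{cc}, and "intra-block" directions, which come from the given bases on the $C(K_j)$, and to glue the two families together using the norming functionals $f_{0,j}^*$. I think of an element of $X$ as a pair $g=(\{g_j\}_{j\ge1},c)$ with $g_j\in C(K_j)$, $\|g_j\|\to0$, informally the function equal to $g_j+c$ on $K_j$. First I would record two isometric maps. The extension by zero $\iota_j\colon C(K_j)\to X$, $h\mapsto(\{0,\dots,0,h,0,\dots\},0)$, is an isometry onto the functions supported on block $j$ with zero constant part. The subspace of $X$ of elements that are constant on each block is isometric to $c$ via the linear map $\Phi\colon c\to X$, $v\mapsto\bigl(\{(v_j-L)\1_{K_j}\}_j,\,L\bigr)$ where $L=\lim_j v_j$; a one-sided inverse of norm one is $\psi\colon X\to c$, $\psi(g)_j:=f_{0,j}^*(g_j)+c$, which lands in $c$ since $\|g_j\|\to0$ and satisfies $\|\psi\|\le1$ because $|f_{0,j}^*(g_j)+c|=|f_{0,j}^*(g_j+cf_{0,j})|\le\|g_j+cf_{0,j}\|_{C(K_j)}\le\|g\|_X$, using $f_{0,j}\equiv1$ and $\|f_{0,j}^*\|=1$; moreover $\psi\circ\Phi=\mathrm{id}_c$ because $f_{0,j}^*(f_{0,j})=1$.

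Let $(x_k^c;x_k^{c,*})_{k=0}^\infty$ be the Auerbach basis of $c$ from Proposition~\ref{cc}, with $x_0^c\equiv1$. I would then propose the basis of $X$ to be $\bigl(x_k;x_k^*\bigr)_{k=0}^{\infty}\cup\bigcup_{j\ge1}\bigl(f_{k,j};f_{k,j}^*\bigr)_{k=1}^{\infty}$, where for $k\ge0$ I set $x_k:=\Phi(x_k^c)$ and $x_k^*:=x_k^{c,*}\circ\psi$ (so $x_0=\Phi(\1)=(\mathbf0,1)$, as required), while for $k\ge1$, $j\ge1$ I reuse the symbol $f_{k,j}$ for $\iota_j(f_{k,j})$ and take $f_{k,j}^*$ to be the original functional composed with the norm-$\le1$ map $g\mapsto g_j+cf_{0,j}$; since $f_{k,j}^*$ kills $f_{0,j}\equiv1$ for $k\ge1$, this simply reads $g\mapsto f_{k,j}^*(g_j)$. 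All the vectors and functionals have norm $1$: $\|x_k\|=\|x_k^c\|=1$ and $\|f_{k,j}\|=\|f_{k,j}\|_{C(K_j)}=1$ since $\Phi,\iota_j$ are isometries, $\|x_k^*\|\le\|x_k^{c,*}\|\,\|\psi\|\le1$ with equality attained on $\Phi(c)$, and $\|f_{k,j}^*\|\le1$ with equality attained on $\iota_j(C(K_j))$.

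Biorthogonality I would verify in four cases, each collapsing via $\psi\circ\Phi=\mathrm{id}$, $f_{0,j}^*(f_{l,j})=\delta_{0l}$, $f_{0,j}\equiv1$, and the disjoint supports of $\iota_i,\iota_j$ for $i\ne j$: one gets $x_k^*(x_l)=x_k^{c,*}(x_l^c)=\delta_{kl}$; $x_k^*(f_{l,j})=x_k^{c,*}\bigl(\psi(\iota_j f_{l,j})\bigr)=0$ because the $i$-th entry of $\psi(\iota_j f_{l,j})$ is $\delta_{ij}f_{0,j}^*(f_{l,j})=0$ for $l\ge1$; $f_{k,j}^*(x_l)=x_l^c(j)\,f_{k,j}^*(f_{0,j})=0$ for $k\ge1$; and $f_{k,j}^*(f_{l,i})=\delta_{ij}\delta_{kl}$. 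For totality, if all the new functionals vanish at $g=(\{g_j\},c)$, then for each $j$ the relations $f_{k,j}^*(g_j)=0$ ($k\ge1$) together with totality of the Auerbach basis of $C(K_j)$ force $g_j-f_{0,j}^*(g_j)f_{0,j}=0$, so $g$ is constant on each block and $g=\Phi(v)$ with $v=\psi(g)\in c$; then $0=x_k^*(g)=x_k^{c,*}(v)$ for all $k$ gives $v=0$ by totality in $c$, hence $g=0$. For fundamentality, given $g$ and $\varepsilon>0$, I would truncate to $g'=(\{g_1,\dots,g_N,\mathbf0,\dots\},c)$ with $\|g-g'\|<\varepsilon/2$ (possible since $\|g_j\|\to0$), then approximate each $g_j$ ($j\le N$) in $C(K_j)$ by a finite combination $c_0^{(j)}f_{0,j}+\sum_{k\ge1}c_k^{(j)}f_{k,j}$; applying $\iota_j$ and summing shows $g'$ is within $\varepsilon/2$ of $\bigl[(\mathbf0,c)+\sum_{j\le N}c_0^{(j)}\iota_j(\1_{K_j})\bigr]+\sum_{j\le N}\sum_{k\ge1}c_k^{(j)}f_{k,j}$, in which the bracketed term is constant on blocks and eventually constant, hence a finite linear combination of the $x_k$'s by the triangular-matrix computation of Proposition~\ref{cc}, while the remaining double sum lies in the span of the $f_{k,j}$'s.

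The real content, and the step I expect to be the main obstacle, is not any single verification but choosing the auxiliary maps $\iota_j,\Phi,\psi$ so that the two families are mutually biorthogonal and \emph{all} remain of norm exactly $1$ after being transplanted into $X$. The hypothesis $f_{0,j}\equiv1$ is exactly what is needed here, twice over: it makes $\psi$ norm-one (so the extended functionals $x_k^*$ stay norm-one), and it makes each $f_{k,j}^*$ with $k\ge1$ annihilate the constants (so these functionals both extend isometrically and stay orthogonal to every $x_l$, which is also why the redundant vectors $f_{0,j}$ must be discarded). Once this scaffolding is fixed, biorthogonality, totality and fundamentality reduce — exactly as in the worked example above — to the corresponding properties of the basis of $c$ from Proposition~\ref{cc} and of the given bases on the $C(K_j)$.
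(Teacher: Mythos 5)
Your proof is correct and follows essentially the same route as the paper: your vectors $\Phi(x_k^c)$, functionals $x_k^{c,*}\circ\psi$, and transplanted pairs $\iota_j(f_{k,j})$, $g\mapsto f_{k,j}^*(g_j)$ coincide exactly with the paper's $x_k$, $x_k^*$, $x_{k,j}$, $x_{k,j}^*$, merely repackaged through the explicit norm-one maps $\iota_j,\Phi,\psi$ instead of the paper's $e_j$, $e_j^*$ coordinate notation. The verifications of norms, biorthogonality, totality and fundamentality likewise mirror the paper's argument.
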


\begin{proof}
We start by introducing some notation. Given a function $f \in C(K_j)$, we set 
$$
f e_j := (\underbrace{0,...0}_\text{$j-1$},f,0,...) \in \left(C(K_1) \oplus C(K_2) \oplus ... \right)_{c_0},
$$ similarly for $f^* \in C(K_j)^*$, 
$$
f^* e_j^* := (\underbrace{0,...0}_\text{$j-1$},f^*,0,...) \in \left(C(K_1)^* \oplus C(K_2)^* \oplus ... \right)_{l_1},
$$
and finally 
$$
f^* e_j^* (f e_k) = 
\left\{
\begin{array}{l l}
f^*(f) & \quad \text{if $j=k$,} \\
0 & \quad \text{otherwise.} \\
\end{array}
\right.
$$
For $(x,c) \in X$ we may write 
$$
(x,c) = \sum_{j=1}^{\infty} (x_j + c\1_{K_j}) e_j,
$$
and therefore we may interpret elements of $X$ and $X^*$ as linear combinations of $e_j$ or $e_j^*$ with suitable coefficients.
We now mimic the construction from example in order to construct the basis. For $j=1,2,...$ and $k=1,2,...$ consider
$$
x_{k,j} = f_{k,j} e_j,
$$
$$
x_{k,j}^* = f_{k,j}^* e_j^*.
$$
Notice, that we purposely omit the constant functions. Set $x_0 = (0,1) =  \sum_{j=1}^{\infty} \1_{K_j} e_j. $ and define $x_0^*$ by
$$
x_0^* = \sum_{j=1}^{\infty} 2^{-j} f_{0,j}^* e_j^*,
$$
and for $k=1,2,...$
$$
x_k = -e_k + \sum_{j=k+1}^{\infty} e_j,
$$
$$
x_k^* = \frac{-f_{0,k}^*e_k^*}{2} + \sum_{j=k+1}^{\infty} \frac{f_{0,j}^*e_j^*}{2^{j-k+1}}.
$$
We claim that $\left(x_k;x_k^*\right)_{k=0}^{\infty} \cup \bigcup_{j \ge 1} \left(x_{k,j};x_{k,j}^*\right)_{k=1}^{\infty}$ is an Auerbach basis in $X$. It is clear that $\| x_k \| = \| x_k^* \| = \|x_{k,j} \| = \| x_{k,j}^* \| = 1$. Biorthogonality follows from the fact that $\left(f_{k,j};f_{k,j}^*\right)_{k=0}^{\infty}$ were Auerbach bases containing constant function $1$ and that $\left(x_k;x_k^*\right)_{k=0}^{\infty}$ is essentially the Auerbach basis from $c$ in disguise. As for totality and fundamentality, the double-indexed part of the basis corresponds to setting the value on each block $C(K_j)$ independently, up to the constant factor. On the other hand, the single-indexed part is connected to providing the missing constants and the scalar value. More formally, given $(y,c) \in X$ and $\epsilon > 0$, we find $N$, a sequence $g_j \in C(K_j)$, where $j=1,2,...,N$ and $t \in \R$ such that 
$$
\left\|(y,c) - \left[\sum_{j=1}^{N} g_j e_j + \sum_{j=N+1}^{\infty} t\1_{K_j} e_j \right] \right\| < \epsilon /2.
$$
We can approximate each $g_j$ by $h_j  =  \sum_{k=0}^{\infty} \alpha_{j} (k) f_{k,j}$, where $\alpha_j \in c_{00}$ and $\left\| g_j - h_j \right\| < \epsilon / 2$. It is also possible to find $\beta \in c_{00}$ such that 
$$
\sum_{j=0}^{\infty} \beta_j x_j = \sum_{j=1}^{N} \alpha_{j}(0) \1_{K_j} e_j + \sum_{j=N+1}^{\infty} t\1_{K_j} e_j.
$$
Therefore
\begin{align*}
& \left\| \left[\sum_{j=1}^{N} g_j e_j + \sum_{j=N+1}^{\infty}  t \1_{K_j} e_j \right] - \left[\sum_{j=0}^{\infty} \beta_j x_j + \sum_{j=1}^{N} \sum_{k=1}^{\infty} \alpha_{j} (k) x_{k,j}   \right] \right\| \\
& =  \left\| \sum_{j=1}^{N} g_j e_j - \sum_{j=1}^{N} h_j e_j  \right\| < \epsilon / 2,
\end{align*}
and we have just proven fundamentality. To check that the sequence is total, notice that for $y = \sum_{j=1}^{\infty} y_j e_j$, the condition $x_{k,j}^*(y) = 0$ for $k=1,2,...$ implies that $f_{k,j}^*\left(y_j - f_{0,j}^*(y_j)f_{0,j}\right) = 0$ for $k=0,1,2,...$ and therefore $y_j$ is a constant function. If $y_j$ is constant for every $j=1,2,...$, then $\left(x_k;x_k^*\right)_{k=0}^{\infty}$ mimics an Auerbach basis in $c$ and totality is established.

\end{proof}

\begin{thm}
Let $K$ be a countable compact metric space, then $C(K)$ has an Auerbach basis.
\end{thm}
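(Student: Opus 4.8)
The plan is to reduce the general case to the building blocks $Y_\alpha$ via the isometric classification theorem quoted above, and then to establish, by transfinite induction on $\alpha$, that each $Y_\alpha$ has an Auerbach basis containing the constant function $\mathbf 1=(\mathbf 0,1)$. Since $C(K)$ is isometric to a finite supremum-norm direct sum $Y_\alpha\oplus\dots\oplus Y_\alpha$, and a finite direct sum of spaces each carrying an Auerbach basis again carries one (take the union of the bases, extended by zero, and normalize; biorthogonality, totality and fundamentality are immediate for finite sums in the supremum norm), it suffices to treat a single $Y_\alpha$. So the real content is the transfinite induction.

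The base case $\alpha=1$ is $Y_1=c_0\oplus 1=c$, which was handled explicitly in Proposition \ref{cc} (the $n=1$ case), and there the basis produced has $x_0\equiv 1$, i.e.\ it contains the constant function, which is exactly the hypothesis needed to feed into Lemma \ref{key}. For the successor step $\beta=\alpha+1$ we have $Y_\beta=c_0(Y_\alpha)\oplus 1$, which is precisely the form $\left(C(K_1)\oplus C(K_2)\oplus\dots\right)_{c_0}\oplus 1$ with every $C(K_j)=Y_\alpha$; by the induction hypothesis each $Y_\alpha$ has an Auerbach basis with $f_{0,j}\equiv 1$, so Lemma \ref{key} applies verbatim and yields an Auerbach basis of $Y_\beta$ with $x_0=(\mathbf 0,1)$, i.e.\ again the constant function, so the inductive invariant is preserved. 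For a limit ordinal $\beta$ we pick $\alpha_n\nearrow\beta$ and $Y_\beta=\left(Y_{\alpha_1}\oplus Y_{\alpha_2}\oplus\dots\right)_{c_0}\oplus 1$; this is once more of the form required by Lemma \ref{key}, now with $C(K_j)=Y_{\alpha_j}$, each of which has by the induction hypothesis an Auerbach basis containing the constant $1$, so Lemma \ref{key} again produces an Auerbach basis of $Y_\beta$ with $x_0=(\mathbf 0,1)$. Thus the invariant ``$Y_\alpha$ has an Auerbach basis containing the constant function'' passes through all three cases of the transfinite recursion, and by transfinite induction it holds for every $1\le\alpha<\omega_1$.

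Finally, to handle a genuinely arbitrary countable compact metric space $K$: if $K$ is finite, $C(K)$ is finite-dimensional and Auerbach's theorem applies; if $K$ is infinite, the classification theorem gives $C(K)\cong Y_\alpha^{\oplus n}$ isometrically, and combining the transfinite induction above with the finite-direct-sum observation finishes the proof. The step I expect to be the main obstacle is not any single case but making sure the inductive invariant $f_{0,j}\equiv 1$ is genuinely maintained and genuinely used: Lemma \ref{key} requires the input bases to contain the constant function, and it is designed to output a basis again containing the constant $(\mathbf 0,1)$, so one must simply check that both the base case (Proposition \ref{cc}) and the output of Lemma \ref{key} meet this normalization, and that the limit-ordinal bookkeeping (choice of $\alpha_n\nearrow\beta$, relabeling the summands as $C(K_j)$) does not disturb it. Everything else — biorthogonality, the norm-one property, totality and fundamentality — is inherited directly from Lemma \ref{key} and the finite-sum remark.
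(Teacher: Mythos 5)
Your proposal is correct and follows essentially the same route as the paper: reduce via the isometric classification to the spaces $Y_\alpha$, run a transfinite induction powered by Lemma \ref{key} (whose hypothesis and conclusion are exactly the invariant ``the basis contains the constant function''), and dispose of the finite direct sum by juxtaposing copies of the basis. You merely make explicit some bookkeeping (the base/successor/limit cases, the finite-$K$ case, and the zero-extension for the finite sum) that the paper leaves implicit.
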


\begin{proof}
Using transfinite induction and lemma \ref{key} we can find a basis for all $Y_\alpha$ where $1 \le \alpha < \omega_1$. From the aforementioned isometric classification of $C(K)$ spaces, we know that $C(K)$ is isometric to $\underbrace{Y_\alpha \oplus ... \oplus Y_\alpha}_\text{$n$}$ for some $1 \le \alpha < \omega_1$ and $n \in \N$, where the direct sum is taken in the supremum norm. It is now enough to take $n$ distinct copies of Auerbach basis in $Y_\alpha$.
\end{proof}

\section{Acknowledgments}

The author would like to express his sincere gratitude to M. Wojciechowski for encouragement, fruitful discussions and introduction into the topic.
\bibliography{auerbach}

\begin{thebibliography}{1}

\bibitem{L}
Joram Lindenstrauss and Lior Tzafriri.
\newblock {\em Classical {B}anach spaces. {I}}.
\newblock Springer-Verlag, Berlin, 1977.
\newblock Sequence spaces, Ergebnisse der Mathematik und ihrer Grenzgebiete,
  Vol. 92.

\bibitem{PO}
R.~I. Ovsepian and A.~Pe{\l}czy{\'n}ski.
\newblock On the existence of a fundamental total and bounded biorthogonal
  sequence in every separable {B}anach space, and related constructions of
  uniformly bounded orthonormal systems in {$L^{2}$}.
\newblock {\em Studia Math.}, 54(2):149--159, 1975.

\bibitem{P}
A.~Pe{\l}czy{\'n}ski.
\newblock All separable {B}anach spaces admit for every {$\varepsilon >0$}\
  fundamental total and bounded by {$1+\varepsilon $} biorthogonal sequences.
\newblock {\em Studia Math.}, 55(3):295--304, 1976.

\bibitem{PW}
A.~Pe{\l}czy{\'n}ski and M.~Wojciechowski.
\newblock Personal communication.

\bibitem{ROS}
Haskell~P. Rosenthal.
\newblock The {B}anach spaces {$C(K)$}.
\newblock In {\em Handbook of the geometry of {B}anach spaces, {V}ol.\ 2},
  pages 1547--1602. North-Holland, Amsterdam, 2003.

\end{thebibliography}
\bibliographystyle{plain}

\end{document}